\newtheorem{theorem}{Theorem}
\newtheorem*{thm}{Theorem}
\newtheorem{lemma}{Lemma}
\begin{document}

\title[Stability Estimates for Truncated Fourier/Laplace Transform]{Stability Estimates for Truncated \\Fourier and Laplace Transforms}
\author[R.R. Lederman]{Roy R. Lederman}
\address{Roy R. Lederman, Program in Applied and Computational Mathematics, Princeton University, Fine Hall, Washington Road, Princeton, NJ 08544, USA} \email{roy@math.princeton.edu}

\author[S. Steinerberger]{Stefan Steinerberger}
\address{Stefan Steinerberger, Department of Mathematics, Yale University, 10 Hillhouse Avenue, New Haven, CT 06511, USA} \email{stefan.steinerberger@yale.edu}

\begin{abstract} We prove sharp stability estimates for the Truncated Laplace Transform and Truncated Fourier Transform. The argument combines an approach recently introduced by Alaifari, Pierce and the second author for the truncated Hilbert transform with classical results of Bertero, Gr\"unbaum, Landau, Pollak and Slepian. In particular, we prove there is a universal constant
$c >0$ such that for all $f \in L^2(\mathbb{R})$ with compact support in $[-1,1]$ normalized to $\|f\|_{L^2[-1,1]} = 1$
$$ \int_{-1}^{1}{|\widehat{f}(\xi)|^2d\xi} \gtrsim \left(c\left\|f_x \right\|_{L^2[-1,1]} \right)^{- c\left\|f_x \right\|_{L^2[-1,1]}}$$
The inequality is sharp in the sense that there is an infinite sequence of orthonormal counterexamples if $c$ is chosen too small. The question whether and to which extent similar inequalities
hold for generic families of integral operators remains open.
\end{abstract}
\maketitle
\vspace{-15pt}
\section{Introduction} 
\subsection{Introduction.} Given a compact operator $T:H \rightarrow H$ on a Hilbert space $H$, compactness implies that the inversion problem, i.e. reconstructing $x$ from $y$ in
$$ Tx = y$$
is ill-posed: small changes in $y$ may lead to arbitrarily large changes in $x$. The simplest example is perhaps that of integral operators on $L^2(\mathbb{R})$ where integration acts as a smoothing
process and makes inversion of the operator difficult. Of particular importance is the Hilbert transform
$$ (Hf)(x) = \frac{1}{\pi}\mbox{p.v.}\int_{\mathbb{R}}{\frac{f(y)}{x-y}dy},$$
which satisfies $\| Hf\|_{L^2(\mathbb{R})} = \|f\|_{L^2(\mathbb{R})}$. However, in practice, measurements
have to be taken from a compact interval and this motivates the definition of the truncated Hilbert transform: using $\chi_I$ to
denote the characteristic function on an interval $I \subset \mathbb{R}$, the truncated Hilbert transform $H_T:L^2(I) \rightarrow L^2(J)$ on the
intervals $I,J \subset \mathbb{R}$ is given by
$$ H_T = \chi_{J} H(f \chi_{I}).$$
Whenever the intervals $I$ and $J$ are disjoint, the singularity of the kernel never comes into play and the operator is highly smoothing: indeed, if $I$ and $J$
are disjoint, the operator becomes \textit{severely} ill-posed and the singular values decay exponentially fast. The inversion problem is ill-behaved even on finite-dimensional subspaces: \textit{every}  subspace $V \subset L^2(I)$ contains some $0 \neq f \in V$ with
$$ \| H_T f\|_{L^2(J)} \leq c_1 e^{-c_2 \dim(V)} \| f \|_{L^2(I)} \qquad \mbox{for some}~c_1, c_2 > 0~\mbox{depending only on}~I,J.$$

\begin{figure}[h!]
\begin{center}
\begin{tikzpicture}[xscale=9,yscale=1.1]
\draw [ultra thick, domain=0:1, samples = 300] plot (\x, {-0.15269*sin(2*pi*\x r)    +    0.4830*sin(3*pi*\x r)  +  0.3084*sin(4*pi*\x r)  + 0.80509*sin(5*pi*\x r)}  );
\draw [thick, domain=0:1] plot (\x, {0}  );
\filldraw (0,0) ellipse (0.006cm and 0.048cm);
\node at (0,-0.3) {0};
\filldraw (1,0) ellipse (0.006cm and 0.048cm);
\node at (1,-0.3) {1};
\end{tikzpicture}
\caption{A function $f$ on $[0,1]$ with $\|Hf\|^2_{L^2([2,3])} \sim 10^{-7}\|f\|^2_{L^2([0,1])}$} 
\end{center}
\end{figure}
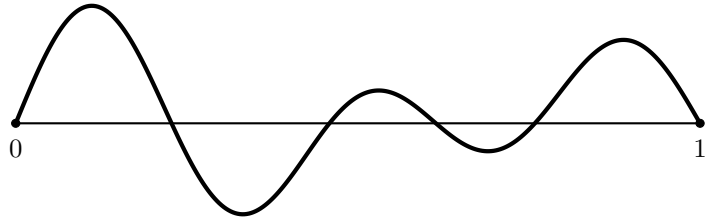

This strong form of ill-posedness makes it very easy to construct bad examples: take any finite
orthonormal set $\left\{\phi_1, \phi_2, \dots, \phi_n \right\} \subset L^2(I)$, By linearity, we have for any scalar $a_1, \dots, a_n$ that
$$ \left\| H_T \left(\sum_{k=1}^{n}{a_k \phi_k}\right)\right\|_{L^2(J)}^2 = \sum_{i, j = 1}^{n}{a_i a_j \left\langle H_T \phi_i, H_T \phi_j \right\rangle_{L^2(J)}}$$
which is a simple quadratic form. Finding the eigenvector corresponding to the smallest eigenvalue of the Gramian $G = (\left\langle H_T \phi_i, H_T \phi_j \right\rangle)_{i,j=1}^{n}$
produces a suitable linear combination of  $\left\{\phi_1, \phi_2, \dots, \phi_n \right\}$ for which $\|H_Tf\|_{L^2(J)} \ll \|f\|_{L^2(I)}$. The strong degree of ill-posedness guarantees that the smallest eigenvalue decays
exponentially in $n$ independently of the orthonormal basis. Recently, Alaifari, Pierce and the second author \cite{al} showed that it is nonetheless possible to guarantee some control by proving a new type of stability
estimate for the  Hilbert transform: for disjoint intervals $I,J \subset \mathbb{R}$
$$  \|H f\|_{L^2(J)} \geq c_1 \exp{\left(-c_2\frac{ \|f_x\|_{L^2(I)}}{\|f\|_{L^2(I)}}\right)} \| f \|_{L^2(I)},$$
where the constants $c_1, c_2$ depend only on the intervals $I,J$.
This estimate guarantees that the only way for $Hf$ to be substantially smaller than $f$ is the presence of oscillations. If one reconstructs data $f$ from
measurements $g$ (the equation being $H_T f = g$), then a small error $f + h$ yields
$$ H_T(f +h) = H_Tf + H_T h= g + H_T h.$$
The stability estimate implies that one can guarantee to distinguish $f$ from $f+h$ when $h$ has few oscillations.
The only existing result in this direction is \cite{al} for the Hilbert transform.

\section{Main results}

The purpose of our paper is to combine the argument developed by Alaifari, Pierce and the second author \cite{al} with classical results of Bertero \& Gr\"unbaum \cite{gru1}, Landau \& Pollak \cite{pr2, pr3}  and Slepian \& Pollak \cite{pr1}  to establish such stability estimate in three other cases: we give essentially sharp stability estimates for the Truncated Laplace Transform, the Adjoint Truncated Laplace Transform and the Truncated Fourier Transform. While this shows that this class of stability estimates exist in a wider context, the question of whether such results could be 'generically' true (i.e. for a wide class of integral operators) remains open.

\subsection{Truncated Laplace Transform} 
 The truncated Laplace transform $\mathcal{L}_{a,b}:L^2[a,b] \rightarrow L^2[0,\infty]$ is defined via
$$
 (\mathcal{L}_{a,b}f)(s) = \int_{a}^{b}{e^{-s t} f(t) dt},
$$
where $0 < a < b < \infty$. 
The operator $\mathcal{L}_{a,b}$ is compact and its image is dense in $L^2[0,\infty]$. We show
that if $\|\mathcal{L}_{a,b} f\|_{L^2[0, \infty]} \ll \|f\|_{L^2[a,b]}$,
then this is due to the presence of oscillations.

\begin{figure}[h!]
\begin{center}
\begin{tikzpicture}[xscale=9,yscale=1.1]
\draw [ultra thick, domain=1:2, samples = 300] plot (\x, {-0.0707*sin(pi*\x r)    - 0.421*sin(2*pi*\x r)  +  0.2137*sin(3*pi*\x r)  + 0.8783*sin(4*pi*\x r)}  );
\draw [thick, domain=1:2] plot (\x, {0}  );
\filldraw (1,0) ellipse (0.006cm and 0.048cm);
\node at (1,-0.3) {1};
\filldraw (2,0) ellipse (0.006cm and 0.048cm);
\node at (2,-0.3) {2};
\end{tikzpicture}
\caption{A function $f$ on $[1,2]$ with $\| \mathcal{L}_{1,2} f \|^2_{L^2[0,\infty]} \sim 10^{-8}\|f\|^2_{L^2([1,2])}$.} 
\end{center}
\end{figure}
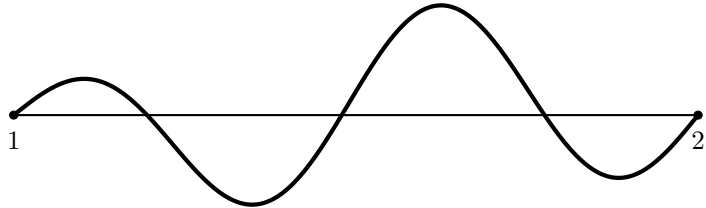

\begin{theorem} There exist $c_1, c_2>0$, depending only on $a,b$, so that for all real-valued $f \in H^1[a,b]$
$$ \| \mathcal{L}_{a,b} f \|_{L^2[0,\infty]} \geq c_1 \exp{\left(-c_2\frac{ \|f_x\|_{L^2[a,b]}}{\|f\|_{L^2[a,b]}}\right)}\|f\|_{L^2[a,b]}.$$
\end{theorem}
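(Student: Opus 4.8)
The plan is to follow the template established for the Hilbert transform in \cite{al}, adapted to the singular value structure of the truncated Laplace transform. The key structural input is that the operator $\mathcal{L}_{a,b}$ is compact and self-adjoint in the appropriate sense, so it admits a singular value decomposition $\mathcal{L}_{a,b} \phi_n = \sigma_n \psi_n$ with an orthonormal basis $\{\phi_n\}$ of $L^2[a,b]$, and the crucial classical fact (due to Bertero \& Gr\"unbaum \cite{gru1}) is that the singular values decay exponentially, $\sigma_n \lesssim e^{-\gamma n}$ for some $\gamma > 0$ depending on $a,b$. Given such $f = \sum_n a_n \phi_n$ with $\sum a_n^2 = \|f\|^2$, we have $\|\mathcal{L}_{a,b} f\|^2 = \sum_n \sigma_n^2 a_n^2$, so making this quantity small forces the mass of $f$ to concentrate on high-index modes $\phi_n$.

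First I would make precise the heuristic that high-index singular functions must oscillate: because $\|\mathcal{L}_{a,b}f\|$ small forces $\sum_{n < N} a_n^2$ to be tiny for $N$ comparable to $\log(1/\|\mathcal{L}_{a,b}f\|)/\gamma$, I would need a lower bound on the Sobolev seminorm $\|f_x\|_{L^2[a,b]}$ in terms of how much mass sits above index $N$. The natural route is to control $\|(\phi_n)_x\|_{L^2[a,b]}$ from below, or more robustly to invoke a Poincar\'e-type / quantitative oscillation estimate: if $f$ is orthogonal (or nearly so) to the span of the first $N$ singular functions, then its derivative must be large, with a bound of the form $\|f_x\|_{L^2[a,b]} \gtrsim N \|f\|_{L^2[a,b]}$ up to constants. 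Combining this with $N \sim \log(1/\|\mathcal{L}_{a,b}f\|)$ and solving for $\|\mathcal{L}_{a,b}f\|$ yields exactly the claimed exponential-in-$\|f_x\|/\|f\|$ lower bound. To run this cleanly I would split $f = f_{\mathrm{low}} + f_{\mathrm{high}}$ at the threshold $N$, observe that if the low part carried most of the mass then $\|\mathcal{L}_{a,b}f\|$ would already be bounded below by $\sigma_N \|f_{\mathrm{low}}\|$, and otherwise the high part dominates and forces large derivative.

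The main obstacle I anticipate is establishing the quantitative link between singular-function index and oscillation, that is, a rigorous lower bound of the form $\|g_x\|_{L^2[a,b]} \gtrsim N \|g\|_{L^2[a,b]}$ for $g$ supported on singular modes of index $\geq N$. Unlike the trigonometric case, the singular functions of $\mathcal{L}_{a,b}$ are not explicit eigenfunctions of $-\partial_x^2$, so one cannot simply read off a spectral gap for the derivative operator. The approach I would take is to exploit a commutation relation: the classical work of Bertero \& Gr\"unbaum and of Slepian \& Pollak shows that these truncated-transform operators commute with a second-order Sturm--Liouville differential operator, and the singular functions are precisely its eigenfunctions with eigenvalues growing like $n^2$. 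I would use that differential operator as a proxy for the Dirichlet energy, proving that its eigenvalue growth gives the desired linear-in-$N$ lower bound on $\|g_x\|$ after accounting for the bounded, positive coefficients of the Sturm--Liouville operator on the compact interval $[a,b]$.

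Finally, I would assemble the pieces: fix the decomposition threshold $N$, balance the two cases so that the exponential decay rate $\gamma$ of $\sigma_N$ and the linear derivative growth are matched, and extract explicit constants $c_1, c_2$ depending only on $a,b$ through $\gamma$ and the Sturm--Liouville coefficients. The same skeleton should then transfer verbatim to the adjoint Laplace and truncated Fourier cases, since in each case the requisite commuting differential operator and exponential singular-value decay are supplied by the classical references \cite{pr1, pr2, pr3, gru1}.
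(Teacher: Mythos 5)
Your proposal follows essentially the same route as the paper: the Bertero--Gr\"unbaum commuting Sturm--Liouville operator supplies the link between singular-function index and oscillation via its eigenvalue growth $\lambda_n \gtrsim n^2$ together with the comparability of its quadratic form to the $H^1$ norm, and the decay profile of the singular values then converts low-frequency mass into the claimed exponential lower bound. Two caveats. First, the singular-value input you state points in the wrong direction: you write $\sigma_n \lesssim e^{-\gamma n}$, but the argument needs the \emph{lower} bound $\sigma_n^2 = \mu_n \geq c\,e^{-Cn}$ (i.e.\ the decay is no faster than exponential), which is the harder fact and is supplied in the paper by \cite{led0}, not by Bertero--Gr\"unbaum, whose contribution is the commutation relation. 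Second, your Poincar\'e-type claim $\|g_x\|_{L^2} \gtrsim N\|g\|_{L^2}$ for $g$ spanned by modes of index $\geq N$ cannot hold as stated for bounded $N$ (it fails already for $N=1$ and $g$ constant): integration by parts only yields $\langle Dg,g\rangle \leq C\left(\|g_x\|^2 + \|g\|^2\right)$ because the zeroth-order coefficient of $D$ does not vanish, so the eigenvalue growth gives only $\|g_x\|^2 \gtrsim (N^2 - C')\|g\|^2$, which is vacuous for small $N$. The paper sidesteps this by a dichotomy on whether $f$ changes sign, invoking a direct kernel-positivity argument (its Lemma 3) when it does not; your alternative of splitting at a threshold $N \sim \|f_x\|/\|f\| + O(1)$ absorbs the offset into the constants and avoids that case distinction altogether, which is a legitimate, arguably cleaner, variant of the same argument.
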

The result is sharp up to constants: if $c_2$ is chosen sufficiently small, then for every $c_1 > 0$ there is an infinite orthonormal sequence of functions for which the inequality fails. The proof proceeds similarly as in \cite{al} with a crucial ingredient for Laplace transforms coming from a  a 1985 paper of Bertero \& Gr\"unbaum \cite{gru1}.

\subsection{Adjoint Truncated Laplace Transform.} The adjoint operator $\mathcal{L}_{a,b}^*:L^2[0,\infty] \rightarrow L^2[a,b]$ 
$$ (\mathcal{L}_{a,b}^*f)(s) = \int_{0}^{\infty}{e^{-s t} f(t) dt}.$$
is very different in structure. We seek a lower bound on $\|\mathcal{L}_{a,b}^*f\|_{L^2[a,b]}$ in terms of $\| f \|_{L^2[0, \infty]}$: if $f$ is supported far away from the
origin, then the exponentially decaying kernel will induce rapid decay even if no oscillations are present (additional oscillations can, of course, further decrease the size of  $\|\mathcal{L}_{a,b}^*f\|_{L^2[a,b]}$).
Any lower bound will therefore have to incorporate where the function is localized and the natural framework for this are weighted estimates.

\begin{theorem} There exist $c_1, c_2$, depending only on $a,b$, so that for all real-valued $f \in H^2[0, \infty]$
$$ \| \mathcal{L}_{a,b}^* f \|_{L^2[a,b]} \geq c_1 \exp{\left(-c_2\frac{ \|x f_{xx}\|_{L^2[0,\infty]}  + \|x f_{x}\|_{L^2[0,\infty]} + \|x f_{}\|_{L^2[0,\infty]} + \| f_{}\|_{L^2[0,\infty]}   }{\|f\|_{L^2[0,\infty]}}\right)}\|f\|_{L^2[0,\infty]}.$$
\end{theorem}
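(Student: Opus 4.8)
The plan is to mirror the strategy of \cite{al} and of the preceding theorem, transported to the half-line. First I would record the singular value decomposition of $\mathcal{L}_{a,b}$: let $\sigma_1 \geq \sigma_2 \geq \dots > 0$ be the singular values, $\{u_n\} \subset L^2[a,b]$ the right and $\{v_n\} \subset L^2[0,\infty]$ the left singular functions, so that $\mathcal{L}_{a,b} u_n = \sigma_n v_n$ and $\mathcal{L}_{a,b}^* v_n = \sigma_n u_n$. Since the range of $\mathcal{L}_{a,b}$ is dense, $\mathcal{L}_{a,b}^*$ is injective and $\{v_n\}$ is a complete orthonormal system in $L^2[0,\infty]$. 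Expanding $f = \sum_n a_n v_n$ then gives the bookkeeping identities
$$ \|f\|_{L^2[0,\infty]}^2 = \sum_n a_n^2, \qquad \|\mathcal{L}_{a,b}^* f\|_{L^2[a,b]}^2 = \sum_n \sigma_n^2 a_n^2. $$
Everything rests on two inputs from Bertero \& Gr\"unbaum \cite{gru1}: a quantitative exponential lower bound on the rate at which $\sigma_n \to 0$, and the existence of a self-adjoint second order differential operator $\tilde D$ on $L^2[0,\infty]$ commuting with $\mathcal{L}_{a,b}\mathcal{L}_{a,b}^*$, hence sharing the eigenbasis $\{v_n\}$, with eigenvalues $\mu_n$ that \emph{increase} in exactly the indexing for which $\sigma_n$ decreases. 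This monotone coupling of the two orderings is the structural feature I would lean on.

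The second step converts smoothness into a weighted energy. Because $\tilde D v_n = \mu_n v_n$, for $f$ in the domain of $\tilde D$ one has the exact identity $\sum_n \mu_n a_n^2 = \langle \tilde D f, f\rangle_{L^2[0,\infty]}$. The operator $\tilde D$ is, up to the explicit constants supplied by \cite{gru1}, of the form $\tilde D f = \alpha(x) f_{xx} + \beta(x) f_x + \gamma(x) f$ with $\alpha,\beta$ of size $O(x)$ and $\gamma$ bounded; consequently $\|\tilde D f\|_{L^2[0,\infty]} \lesssim \|x f_{xx}\| + \|x f_x\| + \|x f\| + \|f\| =: N(f)$, and Cauchy--Schwarz yields
$$ \sum_n \mu_n a_n^2 = \langle \tilde D f, f\rangle \leq \|\tilde D f\|_{L^2[0,\infty]}\,\|f\|_{L^2[0,\infty]} \lesssim N(f)\,\|f\|_{L^2[0,\infty]}. $$
This is precisely where the weights $x$ of the statement are produced, and it explains why, in contrast to the finite interval theorem (where integration by parts against a Sturm--Liouville operator with bounded coefficients returns the clean quantity $\|f_x\|$), the unbounded coefficients on $[0,\infty]$ force the full weighted expression $N(f)$.

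With these ingredients the conclusion follows by the truncation argument. Splitting the expansion at a cutoff index $N$ and using $\mu_n \geq \mu_N$ for $n>N$,
$$ \sum_{n > N} a_n^2 \leq \frac{1}{\mu_N}\sum_{n>N}\mu_n a_n^2 \leq \frac{1}{\mu_N}\langle \tilde D f, f\rangle \lesssim \frac{N(f)\,\|f\|}{\mu_N}; $$
choosing $N$ as the smallest index for which the right-hand side falls below $\tfrac12 \|f\|^2$ forces $\sum_{n \leq N} a_n^2 \geq \tfrac12 \|f\|^2$, whence
$$ \|\mathcal{L}_{a,b}^* f\|_{L^2[a,b]}^2 = \sum_n \sigma_n^2 a_n^2 \geq \sigma_N^2 \sum_{n \leq N} a_n^2 \geq \tfrac12\, \sigma_N^2\, \|f\|^2. $$
Inserting the growth rate of $\mu_n$ (which fixes $N \lesssim N(f)/\|f\|$) and the exponential decay of $\sigma_N$ turns the last line into the claimed bound after taking square roots. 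The main obstacle is the analytic heart of the second step on the non-compact domain: one must check that $\tilde D$ is self-adjoint on a domain containing the relevant $f \in H^2[0,\infty]$, that the boundary contributions at $0$ and at $\infty$ genuinely vanish (the weight $x$ grows precisely where the Laplace kernel decays), and, most delicately, that the eigenvalue growth $\mu_n$ extracted from \cite{gru1} is calibrated correctly against the exponential decay of $\sigma_n$ — it is this balance that determines whether the exponent comes out linear in $N(f)/\|f\|$ as asserted.
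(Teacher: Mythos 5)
Your overall architecture (shared eigenbasis of $\mathcal{L}_{a,b}\mathcal{L}_{a,b}^*$ and a commuting differential operator, truncation of the expansion at a cutoff index, eigenvalue growth against exponential singular-value decay) is exactly the paper's, but there is a genuine gap at the analytic heart of your step two. The commuting operator supplied by Bertero and Gr\"unbaum for the half-line problem is \emph{not} second order: it is the fourth-order operator
$$ \hat{D}_t= -\frac{d^2}{dt^2}\left(t^2 \frac{d^2}{dt^2}  \right) + (a^2 + b^2)\frac{d}{dt} \left(t^2  \frac{d}{dt} \right) + (-a^2b^2 t^2 + 2a^2).$$
Your route through $\langle \tilde D f, f\rangle \leq \|\tilde D f\|\,\|f\|$ therefore breaks down twice over: $\|\hat D_t f\|_{L^2}$ contains terms of the type $\|x^2 f_{xxxx}\|$ and $\|x f_{xxx}\|$, which are not controlled by $N(f)$, and are not even defined for the $f \in H^2[0,\infty]$ of the statement. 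The paper avoids this by never estimating $\|\hat D_t f\|$ at all: it bounds the \emph{quadratic form} by integrating by parts twice, so that two of the four derivatives land on each copy of $f$, giving
$$ \left\langle \hat D_t f, f\right\rangle \leq \int_0^\infty t^2 (f'')^2\,dt + O(1)\int_0^\infty t^2 (f')^2\,dt + O(1)\int_0^\infty (t^2+1) f^2\,dt \lesssim N(f)^2,$$
which needs only $f \in H^2$ with the stated weights. This is the missing idea; with it, the rest of your truncation argument goes through (the eigenvalue growth $\mu_n \gtrsim n^2$ is obtained in the paper by conjugating $\hat D_t = \mathcal{L}_{a,b} \circ D_t \circ \mathcal{L}_{a,b}^{-1}$ back to the second-order operator of Theorem 1).

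There is also an internal inconsistency worth flagging: your bound $\langle \tilde D f, f\rangle \lesssim N(f)\|f\|$, if it were valid, would place the cutoff at $N \sim \sqrt{N(f)/\|f\|}$ (since $\mu_N \gtrsim N^2$ must exceed $N(f)/\|f\|$), not at $N \lesssim N(f)/\|f\|$ as you assert when you insert the decay of $\sigma_N$. The linear dependence of the exponent on $N(f)/\|f\|$ in the theorem comes precisely from the quadratic-form bound $\langle \hat D_t f, f\rangle \lesssim N(f)^2$ combined with $\mu_n \gtrsim n^2$, followed by the elementary inequality $(a^2+b^2+c^2+d^2)^{1/2} \leq a+b+c+d$ to split $\sqrt{J}$ into the four norms appearing in the statement.
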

The result is again sharp in the sense that there are counterexamples for every $c_1 > 0$ if the constant $c_2$ is smaller than some fixed positive
constant depending on $a,b$.

\subsection{Truncated Fourier Transform} Let $\mathcal{F}_T: L^2[-1,1] \rightarrow L^2[-1,1]$ be given by
$$ \mathcal{F}_T = \chi_{[-1,1]}\mathcal{F}\left(\chi_{[-1,1]} f\right)$$
where, as usual, $\mathcal{F}$ denotes the Fourier transform
$$ (\mathcal{F} f)(\xi) = \int_{\mathbb{R}}^{}{f(x) e^{i  \xi x}dx}.$$
The Fourier transform of a compactly supported function is analytic and cannot vanish on an open set. Since it does not vanish on any open set, this yields
$$ \int_{-1}^{1}{|\widehat{f}(\xi)|^2d\xi} > 0$$
for every nonzero $f \in L^2[-1,1]$. The expression can certainly be small because $\widehat{f}$ can have all its $L^2-$mass far away from the origin: however, if $\widehat{f}$ has its $L^2-$mass
far away from the origin, $f$ oscillates on $[-1,1]$. We give a quantitative description of this phenomenon.

\begin{theorem} There exist $c_1, c_2 > 0$ such that for all real-valued $f \in H^1[-1,1]$
$$ \int_{-1}^{1}{|\widehat{f}(\xi)|^2d\xi}\geq c_1\left(c_2 \frac{ \left\| f_x \right\|_{L^2[-1,1]}}{\|f\|_{L^2[-1,1]}} \right)^{-c_2\frac{\left\| f_x \right\|_{L^2[-1,1]}}{\|f\|_{L^2[-1,1]}} } \int_{-1}^{1}{|f(x)|^2dx}.$$
\end{theorem}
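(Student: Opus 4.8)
The plan is to mimic the argument of Alaifari, Pierce and the second author \cite{al} by passing to the orthonormal basis that simultaneously diagonalizes $\mathcal{F}_T$ and a second-order differential operator, and then to convert the regularity information carried by $\|f_x\|$ into decay of the high-index coefficients. Since both sides are homogeneous of degree two in $f$ and the exponent depends only on the scale-invariant ratio $\|f_x\|/\|f\|$, I may assume $\|f\|_{L^2[-1,1]}=1$. The decisive classical input (Slepian \& Pollak \cite{pr1}, Landau \& Pollak \cite{pr2,pr3}) is that the prolate spheroidal wave functions $\{\psi_n\}_{n\geq 0}$ form an orthonormal basis of $L^2[-1,1]$ consisting of eigenfunctions of $\mathcal{F}_T$ and, at the same time, of the Sturm--Liouville operator
$$ L\psi = -\frac{d}{dx}\!\left[(1-x^2)\frac{d\psi}{dx}\right] + x^2\psi, \qquad L\psi_n=\chi_n\psi_n.$$
Writing $f=\sum_n a_n\psi_n$ (so $\sum_n a_n^2=1$) and letting $\lambda_n\geq 0$ denote the eigenvalues of $\mathcal{F}_T^*\mathcal{F}_T$ listed in decreasing order, I would record the two expansions
$$ \int_{-1}^{1}|\widehat f(\xi)|^2\,d\xi=\sum_n\lambda_n a_n^2, \qquad \langle Lf,f\rangle=\sum_n\chi_n a_n^2.$$

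First I would extract the coercivity estimate carried by $L$. Integrating by parts gives
$$ \sum_n\chi_n a_n^2=\int_{-1}^1(1-x^2)|f_x|^2\,dx+\int_{-1}^1 x^2|f|^2\,dx\leq \|f_x\|_{L^2[-1,1]}^2+1,$$
because $0\leq 1-x^2\leq 1$ and $0\leq x^2\leq 1$ on $[-1,1]$, so the weights only help. Since the potential $x^2$ is nonnegative, the min--max principle compares $L$ with the Legendre operator and yields $\chi_n\geq n(n+1)$. Consequently the coefficients beyond an index $N$ carry little mass:
$$ \sum_{n>N}a_n^2\leq\frac{1}{\chi_{N+1}}\sum_{n>N}\chi_n a_n^2\leq\frac{\|f_x\|^2+1}{(N+1)(N+2)}.$$
Choosing $N$ to be the least integer with $(N+1)(N+2)\geq 2(\|f_x\|^2+1)$ --- so that $N\leq C(\|f_x\|+1)$ --- forces $\sum_{n>N}a_n^2\leq\tfrac12$, and hence $\sum_{n\leq N}a_n^2\geq\tfrac12$.

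The remaining step combines monotonicity of the eigenvalues with their precise decay. Since $\lambda_0\geq\lambda_1\geq\cdots$,
$$ \int_{-1}^{1}|\widehat f(\xi)|^2\,d\xi=\sum_n\lambda_n a_n^2\geq\lambda_N\sum_{n\leq N}a_n^2\geq\tfrac12\lambda_N,$$
so it suffices to insert a lower bound on $\lambda_N$. Here I would invoke the classical asymptotics of the prolate eigenvalues at fixed bandwidth (Slepian \cite{pr1}, with the two-sided estimates from \cite{pr2,pr3}): they decay super-exponentially but no faster than a controlled rate, $\log(1/\lambda_n)\lesssim n\log n$, which in the sharp quantitative form reads $\lambda_n\geq c_1(c_2 n)^{-c_2 n}$. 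Substituting $N\leq C(\|f_x\|+1)$ and using that $t\mapsto(c_2 t)^{-c_2 t}$ is decreasing for large $t$ turns this into $\lambda_N\geq c_1'(c_2'\|f_x\|)^{-c_2'\|f_x\|}$; restoring the normalization $\|f\|_{L^2[-1,1]}=1$ then recovers the stated inequality after adjusting constants.

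The main obstacle is precisely this last ingredient: one must pin the decay of $\lambda_N$ down to the form $\exp(-\Theta(N\log N))$, since a faster decay would destroy the claimed exponent while a cruder exponential bound (of the kind that already establishes severe ill-posedness) would fail to close the estimate. Establishing $\lambda_n\geq c_1(c_2 n)^{-c_2 n}$ therefore requires the genuine two-sided prolate eigenvalue asymptotics. The sharpness assertion is read off from the matching upper bound $\lambda_n\leq c_1(c_2 n)^{-c_2 n}$ applied to the functions $\psi_n$ themselves: for these $\int_{-1}^1|\widehat{\psi_n}|^2\,d\xi=\lambda_n$ while $\|(\psi_n)_x\|_{L^2}\asymp\sqrt{\chi_n}\asymp n$, so that if $c_2$ is taken below the critical threshold the inequality fails simultaneously for the entire infinite orthonormal family $\{\psi_n\}$. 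By comparison, the spectral inequality $\chi_n\geq n(n+1)$ and the bookkeeping of constants through the substitution $N\sim\|f_x\|$ are routine.
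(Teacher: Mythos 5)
Your proposal is correct and follows essentially the same route as the paper: expand $f$ in the prolate spheroidal basis, use the commuting Sturm--Liouville operator to show that $\chi_n\gtrsim n^2$ together with $\langle Lf,f\rangle\lesssim \|f_x\|^2+\|f\|^2$ forces half of the $L^2$-mass into the first $O(\|f_x\|/\|f\|)$ modes, and then invoke the Widom/Fuchs-type decay $\log(1/\lambda_n)\sim -n\log n$ to bound $\sum_n\lambda_n a_n^2$ from below by $\lambda_N/2$. The one genuine deviation is how you handle functions with few oscillations: the paper splits into the cases ``$f$ changes sign'' (where $\|f\|_{L^2}$ and the boundary terms are controlled by $\|f_x\|_{L^2}$ via its Lemma 2) and ``$f$ has no sign change'' (where it abandons the spectral argument entirely and uses positivity of the kernel $2\sin(x-y)/(x-y)\geq \tfrac12$ plus its Lemma 3 on $\int f$ versus $\|f\|_{L^2}$). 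You avoid this case distinction by keeping the zeroth-order term and choosing the cutoff $N$ from $\|f_x\|^2+1$ rather than $\|f_x\|^2$; since $(C(t+1))^{-C(t+1)}$ is comparable to $(c_2 t)^{-c_2 t}$ for $t\geq 1$ and both are bounded constants for $t\leq 1$, this loses nothing and is arguably cleaner (your integration by parts in divergence form, with the boundary terms killed by the vanishing weight $1-x^2$, is also tidier than the paper's). You correctly identify the two nontrivial classical inputs that both arguments lean on: that the ordering of the prolates by increasing $\chi_n$ agrees with the ordering by decreasing $\lambda_n$, and the two-sided rate $\lambda_n\geq c_1(c_2n)^{-c_2n}$; the paper likewise cites these to Slepian--Pollak, Widom and Fuchs rather than proving them.
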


\begin{figure}[h!]
\begin{center}
\begin{tikzpicture}[xscale=4.5,yscale=1.1]
\draw [ultra thick, domain=-1:1, samples = 300] plot (\x, {0.00055*cos(pi*\x r)    +  0.0824*cos(2*pi*\x r)  +  0.6196*cos(3*pi*\x r)  + 0.7805*cos(4*pi*\x r)}  );
\draw [thick, domain=-1.05:1.05] plot (\x, {0}  );
\filldraw (-1,0) ellipse (0.012cm and 0.048cm);
\node at (-1,-0.3) {-1};
\filldraw (0,0) ellipse (0.012cm and 0.048cm);
\node at (0,-0.3) {0};
\filldraw (1,0) ellipse (0.012cm and 0.048cm);
\node at (1,-0.3) {1};
\end{tikzpicture}
\caption{A function $f$ on $[-1,1]$ with $\| \mathcal{F}_T f \|^2_{L^2[-1,1]} \sim 10^{-18}\|f\|^2_{L^2([-1,1])}$.} 
\end{center}
\end{figure}
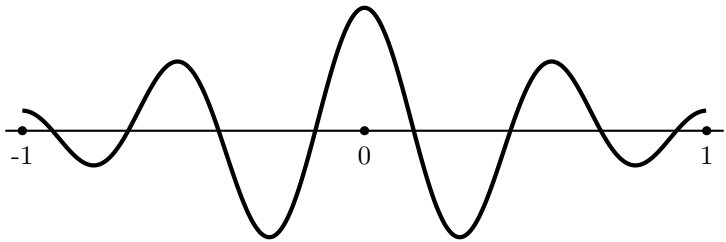

We are not aware of any such results in the literature,
however, the result is certainly close in spirit to the question to which degree simultaneous localization in space and frequency is possible. An example is Nazarov's
quantitative form \cite{naz} of the Amrein-Berthier theorem \cite{am} (see also \cite{bene}): for any $S, \Sigma \subset \mathbb{R}$ with
finite measure and any $f \in L^2(\mathbb{R})$ it is not possible for $f$ to be too strongly localized in $S$ and $\widehat{f}$ to be too
strongly localized in $\Sigma$
$$ \left\|f \chi_{\mathbb{R} \setminus S} \right\|^2_{L^2(\mathbb{R})} + \left\|\widehat{f} \chi_{\mathbb{R} \setminus \Sigma} \right\|^2_{L^2(\mathbb{R})} \geq \frac{e^{-133 |S| |\Sigma|}}{133} \| f\|^2_{L^2(\mathbb{R})}.$$
The proof of Theorem 3 makes use of \textit{prolate spheroidal wave functions} introduced by Landau, Pollak and Slepian \cite{pr2, pr3, pr1, pr4, pr5}. They appear naturally in the
Landau-Pollak uncertainty principle \cite{pr3} which states that if
$\mbox{supp}(\widehat{f}) \subset [-1,1]$
and
$$ \int_{|x| \geq T}{|f(x)|^2 dx} \leq \varepsilon \|f\|_{L^2(\mathbb{R})},~\mbox{then}
\qquad \|f - \pi(f) \|_{L^2} \leq 49\varepsilon^2 \|f\|_{L^2},$$
where $\pi$ is the projection onto a $(4\left\lfloor T \right\rfloor +1)-$dimensional subspace spanned by the first elements of a particular \textit{universal} orthonormal basis $(\phi_n)_{n \in \mathbb{N}}$ (these are the prolate spheroidal wave functions).  \\

\textbf{Outline of the paper.} \S 3 gives a high-level overview of the argument and provides two easy inequalities for real functions that will be used in the proofs.  \S 4 explains the underlying machinery specially required to prove Theorem 1 and gives the full proof. A very similar argument allows to prove Theorem 2 and we describe the necessary modifications in \S 5. \S 6 gives a proof of Theorem 3. $c_1, \dots, c_5$  are positive constants, $\sim$ denotes equivalence up to constants.

%

\section{Outline of the arguments}

\subsection{The overarching structure.}
The proofs (also for the result in \cite{al}) have the same underlying structure: we use a $T^* T$ argument and the fact that
there is a differential operator $D$ whose eigenfunctions coincide with the eigenfunctions of $T^* T$. This allows us to exploit the structure of
the differential operator to analyze the decomposition of a generic function into the orthonormal basis of singular functions.
More precisely: we are interested in establishing lower bounds for an 
injective operator between two Hilbert spaces $T:H_1 \rightarrow H_2$. In all these cases, we assume that
\begin{enumerate}
\item we control the decay of the eigenvalues of $T^*T$ from below,
\item there is a differential operator $D:H_1 \rightarrow H_1$ with the same eigenfunctions as $T^*T$
\item and we can control the growth of eigenvalues $\lambda_n$ of $D$.
\end{enumerate}
Let us denote the $L^2-$normalized eigenfunctions of $D$ (which are also eigenfunctions of $T^*T$) by $(u_n)_{n=1}^{\infty}$. They form an orthonormal basis
of $L^2$ in all situations that are of interest to us. Furthermore, we will use the spectral theorem
$$ \left\langle D f, f\right\rangle = \sum_{n=1}^{\infty}{\lambda_n \left| \left\langle f, u_n \right\rangle \right|^2}$$
and explicit information on the growth of the eigenvalues $\lambda_n$. We can furthermore, using integration by parts and the structure of $D$, control the action of $D$ in the Sobolev space $H^{s}$
$$  \left\langle D f, f\right\rangle \sim    \| f\|^2_{H^s}.$$
The useful insight is that this implies that the eigenfunction $(u_n)_{n=1}^{\infty}$ explore the phase space in a way that is analogous to classical eigenfunctions of the Laplacian: low-energy eigenfunctions
have small derivatives. In particular, if $Df$ is small, then at least some of the projections $|\left\langle f, u_n \right\rangle|$ have to be big for $n$ somewhat small. Conversely, functions whose $L^2-$energy
is mostly concentrated on high-frequency eigenfunctions $(u_n)_{n \geq N}$ have $ |\left\langle D f, f\right\rangle|$ large. The next Lemma makes this precise.

\begin{lemma}[Low oscillation implies low frequency] If $\lambda_n \geq c_1  n^{2}$ and $|\left\langle D f, f\right\rangle| \leq c_2 \|f_x\|_{L^2}^2$ for some $0 < c_1, c_2 < \infty$, then there exists a constant $0 < c < \infty$ such that
$$ \sum_{n \leq c \frac{ \|f_x\|_{L^2}}{\|f\|_{L^2}}  }^{}{ \left| \left\langle f, u_n \right\rangle\right|^2} \geq \frac{ \| f\|^2_{L^2}}{2}.$$
\end{lemma}
\begin{proof} Both inequalities have the same scaling under the multiplication with scalars $f \rightarrow \lambda f$, so we can assume w.l.o.g. that $\|f\|_{L^2} = 1$. Trivially,
\begin{align*}
  \sum_{n \geq c_3\|f_x\|_{L^2}}^{}{ \lambda_n  \left| \left\langle f, u_n \right\rangle\right|^2} &\geq   \sum_{n \geq c_3 \|f_x\|_{L^2}}^{}{ c_1 n^{2}  \left| \left\langle f, u_n \right\rangle\right|^2} \\
&\geq c_1 \left(c_3 \|f_x\|_{L_2}\right)^2   \sum_{n \geq c_3 \|f_x\|_{L^2}}^{}{  \left| \left\langle f, u_n \right\rangle\right|^2} 
\end{align*}
However, we also clearly have that
$$ \sum_{n \geq c_3 \|f_x\|_{L^2}}^{}{\lambda_n  \left| \left\langle f, u_n \right\rangle\right|^2}  \leq   \sum_{n=1}^{\infty}{\lambda_n  \left| \left\langle f, u_n \right\rangle\right|^2}= |\left\langle D f, f\right\rangle| \leq   c_2\| f_x\|^2_{L^2}.$$
As a consequence
$$   \sum_{n \geq c_3 \|f_x\|_{L^2}}^{}{ \left(c_3 \|f_x\|_{L^2}\right)^{2}  \left| \left\langle f, u_n \right\rangle\right|^2}  \leq \frac{c_2}{c_1 c_3^2},$$
which can be made smaller than $1/2$ for a suitable choice of $c_3$ (depending on $c_1,c_2$). Since the $(u_n)_{n=1}^{\infty}$ form an orthonormal system
$$ 1 = \|f\|_{L^2}^2 =  \sum_{n=1 }^{\infty}{\left| \left\langle f, u_n \right\rangle\right|^2},~\mbox{we get} \quad \sum_{n \leq \sqrt{\frac{2c_2}{c_1}} \frac{ \|f_x\|_{L^2}}{   \|f\|_{L^2}  }  }^{}{ \left| \left\langle f, u_n \right\rangle\right|^2} \geq \frac{ \| f\|^2_{L^2}}{2}.$$
\end{proof}

We may not know the eigenfunctions $(u_n)_{n=1}^{\infty}$ but we can ensure that for any function $f$ half
of its $L^2-$mass of the expansion will be contained in the subspace
$$ \mbox{span}\left\{u_n: n \leq c \frac{ \|f_x\|_{L^2}}{\|f\|_{L^2}} \right\} \subset H_1.$$
The second step of the argument invokes decay of the eigenvalues $\mu_n$ of $T^* T$ via
\begin{align*}
\| T f\|^2_{H_2} &= \left\langle Tf, Tf \right\rangle_{H_2} = \left\langle T^*Tf, f \right\rangle_{H_1} = \sum_{n =1 }^{\infty}{ \mu_n |\left\langle f, u_n \right\rangle|^2}
\end{align*}
and combining this with the previous argument to obtain
$$ \sum_{n =1 }^{\infty}{ \mu_n |\left\langle f, u_n \right\rangle|^2} \geq \sum_{n \leq c \frac{ \|f_x\|_{L^2}}{\|f\|_{L^2}}   }^{\infty}{ \mu_n |\left\langle f, u_n \right\rangle|^2} \geq 
\mu_{ c \frac{ \|f_x\|_{L^2}}{\|f\|_{L^2}}  } \sum_{n \leq  c \frac{ \|f_x\|_{L^2}}{\|f\|_{L^2}}  }^{\infty}{|\left\langle f, u_n \right\rangle |^2} \geq  \frac{\mu_{ c \frac{ \|f_x\|_{L^2}}{\|f\|_{L^2} }} \|f\|^2_{L^2(H_1)} }{2}.  
$$
\textit{Sharpness of results.} It is not difficult to see that these types of arguments are actually sharp (up to constant) if $f=u_n$. This will immediately imply sharpness of our results: if constants
in the statement are chosen too small, then the inequality will fail for $(u_n)_{n \geq N}$ for some $N$ sufficiently large. While this is not our main focus, there is quite
a bit of additional research on precise asymptotics of the constants and how they depend on the intervals (see \cite{led0}).

\subsection{An easy inequality.} All our proofs will have a natural case-distinction: either the function changes sign on the interval $[a,b]$ or it does not. If it changes sign, then
we can use standard arguments to bound all arising terms by $\|f_x\|_{L^2[a,b]}$ which simplifes the expressions. 

\begin{lemma} Let $[a,b] \subset \mathbb{R}$. If $f:[a,b]$ is differentiable and changes sign on $[a,b]$, then
$$ \|f\|_{L^{\infty}[a,b]} \leq  \sqrt{b-a} \|f_x\|_{L^2[a,b]}.$$
\end{lemma}
\begin{proof} Let us assume $f(x_0) = 0$ for some $x_0 \in [a,b]$. Then, for every $x \in [a,b]$, using Cauchy-Schwarz
$$ |f(x)| = \left| \int_{x_0}^{x}{f'(z) dz} \right| \leq \int_{x_0}^{x}{|f'(z)| dz} \leq \sqrt{b-a} \|f_x\|_{L^2[a,b]}.$$
\end{proof}

If $f$ does \textit{not} change sign, then we cannot bound low-regularity terms like $\|f\|_{L^2}$ by high-regularity terms like $\|f_x\|_{L^2[a,b]}$. However, there is also no cancellation in
the integral operator and arguments specifically taylored to the integral operators will admit easy lower bounds in terms of the $L^1-$norm. The next inequality shows that the lower bounds we obtain in the Theorems are much smaller than the $L^1-$norm so that we may treat both cases at the same time.

\begin{lemma} Let $[a,b] \subset \mathbb{R}$. Then, for every $c_2 > 0$, there exists a $c_1 > 0$ (depending on $c_2, a, b$) such that for all nonnegative, differentiable $f:[a,b] \rightarrow \mathbb{R}_{+}$ 
$$  \int_{a}^{b}{ f(x) dx}  \geq  c_1 \exp{\left(-c_2\frac{ \|f_x\|_{L^2[a,b]}}{\|f\|_{L^2[a,b]}}\right)}\|f\|_{L^2[a,b]}.$$
\end{lemma}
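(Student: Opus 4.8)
The plan is to prove the stronger statement that the left-hand side $\int_a^b f = \|f\|_{L^1[a,b]}$ decays only \emph{polynomially} in the ratio $r := \|f_x\|_{L^2[a,b]}/\|f\|_{L^2[a,b]}$, and then to observe that any such polynomial lower bound automatically dominates the exponentially small right-hand side. Throughout I write $\ell = b-a$ and assume $f \not\equiv 0$ (the case $f \equiv 0$ being trivial); since $f$ is differentiable on the closed interval it is continuous, hence bounded.

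First I would exploit nonnegativity through the elementary interpolation
$$ \|f\|_{L^2[a,b]}^2 = \int_a^b f \cdot f \;\le\; \|f\|_{L^\infty[a,b]}\int_a^b f \;=\; \|f\|_{L^\infty[a,b]}\,\|f\|_{L^1[a,b]}, $$
so that a lower bound for $\|f\|_{L^1}$ follows from an upper bound for $\|f\|_{L^\infty}$. For the latter, choose (by the intermediate value theorem) a point $y$ at which $f$ attains its mean value $\|f\|_{L^1}/\ell$; writing $f(x) = f(y) + \int_y^x f'(z)\,dz$ and applying Cauchy--Schwarz exactly as in the proof of the previous lemma gives
$$ \|f\|_{L^\infty[a,b]} \;\le\; \frac{\|f\|_{L^1[a,b]}}{\ell} + \sqrt{\ell}\,\|f_x\|_{L^2[a,b]}. $$

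Combining the two displays yields $\|f\|_{L^2}^2 \le \ell^{-1}\|f\|_{L^1}^2 + \sqrt{\ell}\,\|f_x\|_{L^2}\,\|f\|_{L^1}$. Since the two summands on the right add up to at least $\|f\|_{L^2}^2$, at least one of them is $\ge \tfrac12\|f\|_{L^2}^2$; a one-line case distinction then produces
$$ \|f\|_{L^1[a,b]} \;\ge\; \min\!\left(\sqrt{\tfrac{\ell}{2}}\,,\; \frac{1}{2\sqrt{\ell}\,r}\right)\|f\|_{L^2[a,b]} \;\ge\; \frac{c_3}{1+r}\,\|f\|_{L^2[a,b]}, $$
with $c_3 = \min(\sqrt{\ell/2},\,1/(2\sqrt{\ell}))$ depending only on $a,b$; this is the desired polynomial-in-$r$ lower bound.

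Finally, for a fixed $c_2 > 0$ it remains to check that $c_3/(1+r) \ge c_1 e^{-c_2 r}$ for all $r \ge 0$ and some $c_1 = c_1(c_2,a,b) > 0$, i.e. that $g(r) := \tfrac{c_3}{1+r}e^{c_2 r}$ is bounded below by a positive constant on $[0,\infty)$. This holds because $g$ is continuous and strictly positive there and satisfies $g(r) \to +\infty$ as $r \to \infty$, whence $\inf_{r \ge 0} g(r) > 0$; taking $c_1 \le \inf_{r \ge 0} g(r)$ finishes the proof. The only genuine point is the nonnegativity-driven estimate of the third paragraph, which keeps $\|f\|_{L^1}$ polynomially large in $r$; the dominance of a polynomial over the exponential is then immediate, and this is exactly what allows the sign-changing and sign-definite cases to be merged in the proofs of the Theorems.
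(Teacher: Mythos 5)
Your proof is correct, and it takes a genuinely different (and in my view cleaner) route than the paper's. You share the opening reduction $\|f\|_{L^2}^2 \le \|f\|_{L^\infty}\|f\|_{L^1}$, but from there the paper normalizes $\|f\|_{L^\infty}=1$, locates the largest subinterval $J$ on which $f$ drops from $1$ to $1/2$, bounds $\|f_x\|_{L^2(J)} \ge \tfrac{1}{2}|J|^{-1/2}$ by an Euler--Lagrange/variational argument (the linear interpolant minimizes the Dirichlet energy subject to the boundary values), and then checks that $x \mapsto e^{c_2/(2\sqrt{x(b-a)})}\,x/2$ is bounded away from zero. You instead control $\|f\|_{L^\infty}$ from above by evaluating $f$ at a mean-value point and applying Cauchy--Schwarz, which after the two-term case distinction yields the explicit polynomial bound
$$ \|f\|_{L^1[a,b]} \;\ge\; \frac{c_3}{1+r}\,\|f\|_{L^2[a,b]}, \qquad r = \frac{\|f_x\|_{L^2[a,b]}}{\|f\|_{L^2[a,b]}}, $$
and the exponential right-hand side of the Lemma is then dominated trivially. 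What your approach buys: it avoids the existence-of-$J$ case split and the variational minimization entirely, and it isolates the real content of the Lemma as a strictly stronger, scale-invariant polynomial estimate (the paper's argument secretly proves something of the same strength --- $\int f \gtrsim \|f_x\|_{L^2}^{-2}$ after normalization --- but never states it). What the paper's approach buys is essentially nothing extra here; it is simply a different way to quantify that a nonnegative function with small $L^1$-norm but unit sup-norm must have a steep drop. One pedantic point common to both proofs: writing $f(x)=f(y)+\int_y^x f'(z)\,dz$ for a merely differentiable $f$ requires $f'$ to be integrable, but since the statement is vacuous unless $\|f_x\|_{L^2[a,b]}<\infty$, this is harmless and is exactly the level of rigor the paper itself uses in its Lemma 2.
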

\begin{proof} Squaring both sides of the desired inequality and using
$$ \|f\|_{L^2[a,b]}^2 = \int_{a}^{b}{f(x)^2 dx} \leq \|f\|_{L^{\infty}} \int_{a}^{b}{f(x) dx}$$
shows that the desired statement is implied by the stronger inequality
$$ \|f\|_{L^{\infty}[a,b]} \leq \frac{1}{c_1^2} \exp{\left(c_2\frac{ \|f_x\|_{L^2[a,b]}}{\|f\|_{L^2[a,b]}}\right)} \int_{a}^{b}{ f(x) dx}.$$
The inequality is invariant under multiplication with scalars $f \rightarrow c f$, which allows us to assume w.l.o.g. that $\|f\|_{L^{\infty}[a,b]} = 1$.
Let us now take $J \subset [a,b]$ to be the largest possible interval such that $f$ assumes the value 1 on the boundary of $J$ and
the value $1/2$ on the other boundary point. If no such interval exists, then the original inequality trivially holds with $c_1 = \sqrt{b-a}/2$ since
$$  \int_{a}^{b}{ f(x) dx} \geq \frac{b-a}{2}  \geq \frac{ \sqrt{b-a}}{2} \|f\|_{L^2[a,b]} \geq \frac{ \sqrt{b-a}}{2} \exp{\left(-c_2\frac{ \|f_x\|_{L^2[a,b]}}{\|f\|_{L^2[a,b]}}\right)} \|f\|_{L^2[a,b]}.$$
Suppose now that $J$ exists. Clearly, 
$$ \int_{a}^{b}{f(x)dx} \geq  \int_{J}^{}{f(x)dx} \geq \frac{|J|}{2} \qquad \mbox{and} \qquad \|f\|_{L^2[a,b]} \leq \sqrt{b-a}.$$
It remains to bound $\|f_x\|_{L^2[a,b]}$ from below. We use the trivial estimate $\|f_x\|_{L^2[a,b]} \geq \|f_x\|_{L^2(J)}$ and argue that
among all functions on the interval $J$ assuming the values 1 and $1/2$ on the boundary, the linear function yields the smallest value for $\|f_x\|_{L^2(J)}$.
The existence of a minimizing function is obvious because of compactness. The minimizer $g$ has to satisfy the Euler-Lagrange equation, which simplifies to $g_{xx} = 0$.
This implies 
$$\|f_x\|_{L^2(J)} \geq \left\| \left(1 - \frac{x}{2|J|}\right)_{x}\right\|_{L^2[0, |J|]} =  \frac{1}{2\sqrt{|J|}}.$$
Altogether, we have
$$  \frac{1}{c_1^2} \exp{\left(c_2\frac{ \|f_x\|_{L^2[a,b]}}{\|f\|_{L^2[a,b]}}\right)} \int_{a}^{b}{ f(x) dx} \geq  \frac{1}{c_1^2} \exp{\left(\frac{c_2}{2 \sqrt{|J|} \sqrt{b-a} }\right)} \frac{|J|}{2}.$$
However, for every choice of $a,b,c_2>0$ such that $a<b$, the function $h:\mathbb{R}_{+} \rightarrow \mathbb{R}_{+}$ given by
$$ h(x) =  \exp{\left(\frac{c_2}{2 \sqrt{x} \sqrt{b-a} }\right)} \frac{x}{2}$$
is uniformly bounded away from 0 with a lower bound that only depends on $a,b,c_2$. This allows us to pick $c_1$ in such a way that 
$$  \frac{1}{c_1^2} \exp{\left(\frac{c_2}{2 \sqrt{|J|} \sqrt{b-a} }\right)} \frac{|J|}{2} \geq 1 = \|f\|_{L^{\infty}[a,b]}$$
independently of the length of the interval $J$. This gives the result.
\end{proof}

\section{Proof of Theorem 1}

\subsection{The Differential Operator.}

Let $f \in L^2[a,b]$ be arbitrary. We write
$$
 \| \mathcal{L}_{a,b}f \|_{L^2[0,\infty]}^2 = \left\langle  \mathcal{L}_{a,b}f,  \mathcal{L}_{a,b}f \right\rangle_{L^2[0,\infty]} =  \left\langle   \mathcal{L}_{a,b}^*  \mathcal{L}_{a,b}f, f \right\rangle_{L^2[0,\infty]}.$$
A brief computation shows that
\begin{align*}
 \| \mathcal{L}_{a,b}f \|_{L^2[0,\infty]}^2 = \int_{0}^{\infty}{  \left(\int_{a}^{b}{ e^{-s t} f(t) dt} \right)^2 } &= \int_{a}^{b}{ \int_{a}^{b}{    f(t)f(r) \left( \int_{0}^{\infty}{e^{-s t} e^{s r} ds} \right) dt dr} }\\
&=  \int_{a}^{b}{ \left( \int_{a}^{b}{ \frac{f(r)}{ r + t} dr} \right) f(t) dt}
 \end{align*} 
and thus
$$ (\mathcal{L}_{a,b}^*  \mathcal{L}_{a,b} f)(t) = \int_{a}^{b}{\frac{f(s)}{t+s} ds}.$$ 

\subsection{The differential operator.} The next ingredient, due to Bertero \& Gr\"unbaum \cite{gru1}, is crucial: they discovered that $\mathcal{L}_{a,b}^*  \mathcal{L}_{a,b}$ commutes with the differential operator 
$$
D_t =  \frac{d}{dt} \left(  (t^2-a^2)(b^2-t^2) \frac{d}{dt} \right) - 2(t^2-a^2).
$$
\begin{lemma}[Bertero \& Gr\"unbaum \cite{gru1}]\label{lem:gru_commute1}
For $f \in C^2[a,b]$
$$
\mathcal{L}_{a,b}^*  \mathcal{L}_{a,b} D_t f = D_t \mathcal{L}_{a,b}^*  \mathcal{L}_{a,b} f
$$
\end{lemma}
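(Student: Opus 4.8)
The plan is to reduce the operator identity to a pointwise symmetry of the kernel. The preceding computation shows that $\mathcal{L}_{a,b}^* \mathcal{L}_{a,b}$ is the integral operator $K$ with symmetric kernel $k(t,s) = (t+s)^{-1}$ on $[a,b]^2$. I would first rewrite $D_t$ in Sturm--Liouville form $D_t = \frac{d}{dt}\left(p(t)\frac{d}{dt}\right) + q(t)$ with $p(t) = (t^2-a^2)(b^2-t^2)$ and $q(t) = -2(t^2-a^2)$, noting the crucial fact that $p(a) = p(b) = 0$.

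The heart of the matter is the claim that $k$ is symmetric with respect to $D$, i.e.
$$D_t k(t,s) = D_s k(t,s).$$
I would verify this by direct differentiation. Writing $k = (t+s)^{-1}$ and using $\frac{d}{dt}(t+s)^{-1} = -(t+s)^{-2}$ and $\frac{d}{dt}(t+s)^{-2} = -2(t+s)^{-3}$, one collects the three contributions to $D_t k$ and subtracts the analogous $D_s k$. Every resulting term carries a factor $(t-s)$ (after using $t^2-s^2 = (t-s)(t+s)$ and factoring the differences of polynomial coefficients); the remaining bracket simplifies, with the $a^2+b^2$ contributions cancelling, to exactly $2(t+s)^2$, which cancels the $(t+s)^{-2}$ in front, leaving $D_t k - D_s k = 0$. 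This is the single genuinely nontrivial computation and explains the particular coefficients appearing in $D_t$.

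Granting the kernel symmetry, the commutation is soft. For $f \in C^2[a,b]$ the kernel is smooth on $[a,b]^2$ (since $t+s \geq 2a > 0$), so differentiation under the integral sign gives
$$D_t (K f)(t) = \int_a^b (D_t k(t,s)) f(s)\, ds = \int_a^b (D_s k(t,s)) f(s)\, ds.$$
Applying the Lagrange identity for the formally self-adjoint operator $D_s$ transfers the two derivatives from $k$ onto $f$ at the cost of the boundary term $\left[ p(s)\left(\frac{\partial k}{\partial s} f - k \frac{\partial f}{\partial s}\right)\right]_{s=a}^{s=b}$. Because $p(a) = p(b) = 0$ and all factors are bounded on $[a,b]$, this boundary term vanishes and we obtain $D_t(Kf)(t) = \int_a^b k(t,s)(D_s f)(s)\, ds = (K D_t f)(t)$, which is the assertion. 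The main obstacle is entirely the verification of $D_t k = D_s k$; once the endpoints $a,b$ are recognized as the zeros of the leading coefficient $p$, the vanishing of the boundary terms — and hence the commutation — is automatic.
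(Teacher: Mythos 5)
Your proposal is correct and rests on exactly the same two ingredients as the paper's proof: the kernel identity $D_t\,(t+s)^{-1} = D_s\,(t+s)^{-1}$ (which your expansion, with the $(t-s)$ factoring and the cancellation of the $a^2+b^2$ terms, verifies correctly) and a double integration by parts whose boundary terms vanish because $(s^2-a^2)(b^2-s^2)$ vanishes at $s=a,b$. The only difference is direction of travel — you start from $D_t(Kf)$ and move $D_s$ onto $f$, while the paper starts from $K(D_tf)$ and moves $D_s$ onto the kernel — which is immaterial.
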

\begin{proof} The proof is an explicit computation starting with
$$ A = \int_a^b \frac{1}{t+s} \frac{d}{ds} \left(  (s^2-a^2)(b^2-s^2) \frac{d}{ds} \right) f(s) ds.$$
Integration by parts yields that
\begin{align*}
 A &= \frac{1}{t+s}(s^2-a^2)(b^2-s^2) \frac{d}{ds} f(s) \big|_a^b - \int_a^b  \left( \frac{d}{ds} \frac{1}{t+s} \right)  \left(  (s^2-a^2)(b^2-s^2) \frac{d}{ds} f(s) \right) ds  \\
&=  - \int_a^b  \left(  (s^2-a^2)(b^2-s^2) \frac{d}{ds} \frac{1}{t+s} \right)  \left(  \frac{d}{ds} f(s) \right)  ds \end{align*}
A second integration by parts gives that
$$ A = 
\int_a^b  \left( \frac{d}{ds} (s^2-a^2)(b^2-s^2) \frac{d}{ds} \frac{1}{t+s} \right)    f(s) ds. $$
Summarizing, we have just shown that
$$ \int_a^b \frac{1}{t+s} \frac{d}{ds} \left(  (s^2-a^2)(b^2-s^2) \frac{d}{ds} \right) f(s) ds = \int_a^b  \left( \frac{d}{ds} (s^2-a^2)(b^2-s^2) \frac{d}{ds} \frac{1}{t+s} \right) f(s) ds,$$
which we can also write as
$$
\int_a^b  \left( \frac{1}{t+s} \right) D_s f(s) ds = \int_a^b  \left( D_s \frac{1}{t+s} \right) f(s) ds.
$$

A simple computation shows that 
$$
 D_s \frac{1}{t+s} =  D_t \frac{1}{t+s}
$$
and thus, by linearity,
$$ D_t \int_a^b  \frac{1}{t+s}  f(s) ds = \int_a^b \frac{1}{t+s}  D_s f(s) ds.$$
\end{proof}

Using this commutation property in combination with some additional considerations regarding multiplicity (to ensure there are no degeneracies),
one can obtain the following result.
\begin{thm}[Bertero \& Gr\"unbaum, \cite{gru1}]
The eigenfunctions  $(u_n)_{n=1}^{\infty}$ of $D_t$ coincide with the eigenfunctions of $\mathcal{L}_{a,b}^*  \mathcal{L}_{a,b}$.
\end{thm}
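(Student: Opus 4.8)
The plan is to pass from the commutation relation of Lemma~\ref{lem:gru_commute1} to the statement about eigenfunctions by exploiting that the differential operator $D_t$ has \emph{simple} spectrum. First I would record the functional-analytic setup. The operator $A := \mathcal{L}_{a,b}^* \mathcal{L}_{a,b}$ is compact, self-adjoint and positive on $L^2[a,b]$, and in fact injective since the Laplace transform is injective; hence by the spectral theorem its eigenvalues $\mu_1 > \mu_2 > \cdots \to 0$ are positive with finite-dimensional eigenspaces $E_\mu$ whose union is dense in $L^2[a,b]$. The operator $D_t$ is a singular Sturm--Liouville operator whose leading coefficient $(t^2-a^2)(b^2-t^2)$ vanishes to first order at both endpoints; on the natural domain of functions $f$ with $D_t f \in L^2[a,b]$ it is self-adjoint, and the integration by parts already used in Lemma~\ref{lem:gru_commute1} produced no boundary term precisely because this coefficient vanishes at $a$ and $b$.

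The structural step is that $A$ leaves each eigenspace of $D_t$ invariant, and conversely. Indeed, if $D_t u = \lambda u$, then the commutation $A D_t = D_t A$ yields $D_t(A u) = A(D_t u) = \lambda (A u)$, so $A u$ again lies in the $\lambda$-eigenspace of $D_t$; symmetrically, $D_t$ preserves each $E_\mu$. This is the only place where the commutation relation is used, and I would first check that the identity, established on $C^2[a,b]$, extends to the common domain on which both operators are self-adjoint (a density/core argument).

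The decisive ingredient, and the step I expect to be the main obstacle, is the \emph{simplicity} of the spectrum of $D_t$, i.e. that every one of its eigenspaces is one-dimensional; this is exactly the multiplicity consideration flagged in the text. I would establish it by a Wronskian argument. The equation $D_t y = \lambda y$ is a second-order self-adjoint ODE, so for two solutions $u,v$ the quantity $(t^2-a^2)(b^2-t^2)\,W(u,v)(t)$, with $W(u,v) = u v' - u' v$, is constant in $t$. If $u$ and $v$ are both genuine eigenfunctions, hence in the domain and in particular bounded near the regular singular endpoints $t = a, b$ (the Frobenius analysis there selects the non-logarithmic solution branch), then the vanishing prefactor forces this constant to be zero, whence $W(u,v) \equiv 0$ and $u,v$ are linearly dependent. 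The delicate point is precisely this endpoint behavior, since a priori one must rule out a second solution that remains in $L^2$ despite a mild singularity; carrying out the indicial analysis at the first-order zeros of the coefficient is where the real work lies.

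Once simplicity is in hand the conclusion is immediate. Since $A$ preserves the one-dimensional space spanned by each eigenfunction $u_n$ of $D_t$, it acts there as multiplication by a scalar, so every $u_n$ is automatically an eigenfunction of $A$. Because $(u_n)_{n=1}^\infty$ is a complete orthonormal basis of $L^2[a,b]$ and each member is an eigenfunction of $A$, any vector orthogonal to all the $u_n$ vanishes, so no eigenfunction of $A$ is missed and the $(u_n)$ furnish a common orthonormal eigenbasis for both operators. This shows that the eigenfunction systems of $D_t$ and of $\mathcal{L}_{a,b}^* \mathcal{L}_{a,b}$ coincide.
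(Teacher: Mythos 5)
Your proposal is correct and follows exactly the route the paper itself indicates: the paper does not prove this theorem but cites Bertero \& Gr\"unbaum and summarizes the argument as ``the commutation property in combination with some additional considerations regarding multiplicity,'' which is precisely your commutation-plus-simplicity-of-spectrum argument, with the Wronskian/indicial analysis at the endpoints supplying the multiplicity step. The only minor caution is that, strictly speaking, the conclusion one gets this way is that the $(u_n)$ form a \emph{common} orthonormal eigenbasis of both operators (for the eigenfunction sets to literally coincide one would also need simplicity of the spectrum of $\mathcal{L}_{a,b}^*\mathcal{L}_{a,b}$), but this common eigenbasis is exactly what the rest of the paper uses.
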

This means we can now restrict ourselves to an analysis of the differential operator
$$
D_t =  -\frac{d}{dt} \left(  (t^2-a^2)(b^2-t^2) \frac{d}{dt} \right) + 2(t^2-a^2) \qquad \mbox{on}~[a,b],
$$
where we switched the sign to make it positive-definite.
We note that the basic bound $\lambda_n \sim n^2$ on the eigenvalues that follows immediately from standard spectral theory.

\begin{lemma}[Standard estimate, cf. \cite{led0}]   There exists $ c_1 > 0$ depending on $a,b$ such that the eigenvalues of $D_t$ on $[a,b]$
satisfy
$$ \lambda_n \geq c_3 n^2 .$$
\end{lemma}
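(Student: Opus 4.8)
The plan is to use the min--max (Courant--Fischer) characterization of the eigenvalues together with a comparison to the classical Legendre operator, whose spectrum is explicitly $\{n(n+1)\}$. First I would record the quadratic form: for $f\in C^2[a,b]$, integrating by parts once and writing $p(t)=(t^2-a^2)(b^2-t^2)$, $q(t)=2(t^2-a^2)$,
\[
\langle D_t f,f\rangle=\int_a^b p(t)\,f'(t)^2\,dt+\int_a^b q(t)\,f(t)^2\,dt .
\]
The boundary contribution $p f'f\big|_a^b$ vanishes precisely because the weight degenerates at the endpoints, $p(a)=p(b)=0$; thus the only apparent source of difficulty, the endpoint degeneracy, in fact works in our favour. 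Since $q\ge 0$ on $[a,b]$, discarding the zeroth-order term only decreases the form, so $\langle D_t f,f\rangle\ge\int_a^b p\,(f')^2\,dt$, and by min--max it suffices to bound from below the eigenvalues of the degenerate quadratic form $f\mapsto\int_a^b p\,(f')^2\,dt$ on $L^2[a,b]$.

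The point is that $p$ is comparable to the Legendre weight $(t-a)(b-t)$: factoring $p(t)=(t-a)(b-t)(t+a)(b+t)$ and bounding $t+a\ge 2a$, $b+t\ge a+b$ on $[a,b]$ gives $p(t)\ge 2a(a+b)(t-a)(b-t)$. The affine substitution $t=\tfrac{a+b}{2}+\tfrac{b-a}{2}x$ maps $[a,b]$ onto $[-1,1]$ with $(t-a)(b-t)=\tfrac{(b-a)^2}{4}(1-x^2)$, and the Jacobian factors cancel between numerator and denominator of the Rayleigh quotient, so that with $g(x)=f(t(x))$,
\[
\frac{\int_a^b (t-a)(b-t)\,f'(t)^2\,dt}{\int_a^b f(t)^2\,dt}=\frac{\int_{-1}^1(1-x^2)\,g'(x)^2\,dx}{\int_{-1}^1 g(x)^2\,dx}.
\]
The right-hand side is the Rayleigh quotient of the Legendre operator $-\frac{d}{dx}\big((1-x^2)\frac{d}{dx}\big)$, whose eigenvalues are $n(n+1)$ with the Legendre polynomials as eigenfunctions. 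Chaining the two comparisons through the min--max principle yields $\lambda_n\ge 2a(a+b)\,n(n+1)\ge c_3 n^2$ for a suitable $c_3=c_3(a,b)>0$ (the indexing convention only affects the constant).

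The one step that genuinely needs care is functional-analytic rather than spectral: since the form degenerates at $a$ and $b$, the operator $D_t$ must be read as its Friedrichs extension, with form domain the weighted Sobolev space $\{f:\int_a^b p\,(f')^2<\infty\}$. Because $p\asymp(t-a)(b-t)$ on $[a,b]$, this domain coincides with the form domain of the Legendre operator, which is exactly what legitimizes the min--max comparison and makes the vanishing of the boundary terms automatic. I expect this matching of form domains to be the main (and essentially only) obstacle, with the quadratic growth $n(n+1)$ of the Legendre spectrum providing the clean reason behind the stated estimate $\lambda_n\gtrsim n^2$.
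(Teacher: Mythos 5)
The paper does not actually prove this lemma: it is stated as a ``standard estimate'' with a pointer to \cite{led0}, and the text only remarks that the bound $\lambda_n\sim n^2$ ``follows immediately from standard spectral theory.'' Your proposal supplies a genuine, self-contained argument, and it is essentially the right one: pass to the quadratic form (the boundary terms vanish since $p(a)=p(b)=0$), discard the nonnegative zeroth-order term, bound the degenerate weight $p(t)=(t-a)(t+a)(b-t)(b+t)\ge 2a(a+b)(t-a)(b-t)$, and rescale to the Legendre form on $[-1,1]$, whose min--max values are $n(n+1)$. The affine change of variables and the cancellation of Jacobians in the Rayleigh quotient are correct, and you are right that the only genuinely delicate point is identifying $D_t$ with its Friedrichs extension so that the form domains of the two degenerate forms coincide (they do, since $p\asymp(t-a)(b-t)$). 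What your route buys over the paper's citation is an explicit constant $c_3$ in terms of $a,b$ and a proof that does not rely on the asymptotic Weyl-type analysis in \cite{led0}.

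One small repair is needed at the bottom of the spectrum. After you discard the $q$-term, the comparison operator is the pure Legendre form, whose \emph{lowest} eigenvalue is $0$ (constants), so under $1$-based indexing min--max gives $\lambda_n\ge 2a(a+b)(n-1)n$, which is vacuous for $n=1$; a zero lower bound cannot be fixed by ``adjusting the indexing constant.'' The patch is easy: either retain the term $\int_a^b 2(t^2-a^2)f^2\,dt$ and observe that $\langle D_tf,f\rangle=0$ forces $f'\equiv 0$ and $f\equiv 0$ on $(a,b]$, hence $f\equiv 0$, so $\lambda_1>0$ by compactness of the resolvent; or invoke injectivity of $\mathcal{L}_{a,b}^*\mathcal{L}_{a,b}$. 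Then $\lambda_n\ge\min\bigl(\lambda_1,\,a(a+b)\bigr)\,n^2$ for all $n\ge 1$, since $(n-1)n\ge n^2/2$ for $n\ge 2$. With that one sentence added, your argument is complete.
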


\subsection{Proof of Theorem 1.} 
\begin{proof}
The proof combines the various ingredients. We assume w.l.o.g. that $\| f\|_{L^2[a,b]} = 1$. Integration by part gives, for differentiable $f$,
\begin{align*}  \left\langle D f, f \right\rangle  &\leq  \int_{a}^{b}{ (t^2-a^2)(b^2-t^2) \left( \frac{d}{dt} f(t)\right)^2 + 2(t^2-a^2) f(t)^2 dt } \\
&\leq (b^2-a^2)^2\| f_x\|^2_{L^2[a,b]} + 2(b^2-a^2) \| f\|^2_{L^2[a,b]}.\end{align*}
We distinguish two cases: (1) $f$ has a root in $[a,b]$ or (2) $f$ has no roots in $[a,b]$.
We start with the first case. Then Lemma 2 implies
$$ \| f\|^2_{L^2[a,b]} \leq (b-a) \| f\|^2_{L^{\infty}[a,b]} \leq (b-a)^2 \| f_x\|^2_{L^2[a,b]}$$
and thus
\begin{align*} \left| \left\langle D f, f \right\rangle \right| &\leq (b^2-a^2)^2\| f_x\|^2_{L^2[a,b]} + 2(b^2-a^2) \| f\|^2_{L^2[a,b]} \\
&\leq \left( (b^2-a^2)^2 + 2(b^2-a^2)(b-a)^2 \right)\|f_x\|_{L^2[a,b]}^2.\end{align*}
At the same time, since the eigenfunctions form a basis, we may also write
\begin{align*} \left\langle D f, f \right\rangle
= \sum_{n=1}^{\infty}{\lambda_n |\left\langle f, v_n \right\rangle|^2}
  \end{align*}
Altogether, we have, using the lower bound $\lambda_n \geq c_3 n^2$ that
$$   \sum_{n=1}^{\infty}{c_3  n^2 |\left\langle f, v_n \right\rangle|^2} \leq    \sum_{n=1}^{\infty}{\lambda_n |\left\langle f, v_n \right\rangle|^2}  = |\left\langle Df, f\right\rangle| \leq c_4 \| f_x\|^2_{L^2[a,b]} .$$
As a consequence, we can use Lemma 1 to deduce that the Littlewood-Paley projection onto low frequencies contains a positive fraction of the $L^2-$mass
$$   \sum_{n \leq c_5 \|f_x\|_{L^2[a,b]}}^{}{ |\left\langle f, v_n \right\rangle|^2} \geq \frac{1}{2}\|f\|_{L^2[a,b]}^2.$$ 
The argument can now be concluded as follows: it is known that the eigenvalues of $\mathcal{L}_{a,b}^*  \mathcal{L}_{a,b}$ decay exponentially (for estimates, see \cite{led0,led3})
 and we have also just established that a positive proportion of the $L^2-$mass lies at suitably small frequencies. We write 
\begin{align*}
\| \mathcal{L}_{a,b}  f\|^2_{L^2[0, \infty]} &= \left\langle \mathcal{L}_{a,b} f, \mathcal{L}_{a,b} f \right\rangle_{L^2[0, \infty]} = \left\langle \mathcal{L}_{a,b}^* \mathcal{L}_{a,b} f, f \right\rangle_{L^2[a,b]} = \sum_{n =1 }^{\infty}{ \mu_n |\left\langle f, u_n \right\rangle|^2},
\end{align*}
where $(\mu_n)_{n=1}^{\infty}$ are the eigenvalues of $ \mathcal{L}_{a,b}^* \mathcal{L}_{a,b}: L^2[a,b] \rightarrow L^2[a,b]$ and $(u_n)_{n=1}^{\infty}$ is the associated sequence of eigenfunctions. We bound
\begin{align*}
 \sum_{n =1 }^{\infty}{ \mu_n |\left\langle f, u_n \right\rangle|^2} &\geq \sum_{n \leq c_5 \|f_x\|_{L^2[a,b]}  }^{\infty}{ \mu_n |\left\langle f, u_n \right\rangle|^2} \\
&\geq \mu_{ c_5 \|f_x\|_{L^2[a,b]}}  \sum_{n \leq  c_5 \|f_x\|_{L^2[a,b]}}^{\infty}{|\left\langle f, u_n \right\rangle |^2} \\
&\geq  \frac{\mu_{ c_5 \|f_x\|_{L^2[a,b]}}}{2}.
\end{align*}
It is well-known (see e.g. \cite{led0}) that the singular values decay exponentially
$$ \mu_n \geq c_1 e^{-c_2 n},$$
where the constants $c_1, c_2$ only depend on the interval. This yields 
$$ \| \mathcal{L}_{a,b} f \|_{L^2[0,\infty]}^2 = \left\langle   \mathcal{L}_{a,b}^*  \mathcal{L}_{a,b}f, f \right\rangle \geq c_1 \exp{\left(-c_2 \|f_x\|_{L^2[a,b]}\right)}\|f\|^2_{L^2[a,b]}$$
for functions satisfying $\|f\|_{L^2[a,b]} = 1$ which, in turn, implies that for general $f \in L^2[a,b]$ 
$$ \| \mathcal{L}_{a,b} f \|_{L^2[0,\infty]}^2  \geq c_1 \exp{\left(-c_2\frac{ \|f_x\|_{L^2[a,b]}}{\|f\|_{L^2[a,b]}}\right)}\|f\|^2_{L^2[a,b]}.$$
It remains to consider the second case. In that case, $f$ cannot change sign. We assume w.l.o.g. that it is always positive and bound
\begin{align*}
\| \mathcal{L}_{a,b}f \|_{L^2[0,\infty]}^2 =  \int_{a}^{b}{ \left( \int_{a}^{b}{ \frac{f(r)}{ r + t} dr} \right) f(t) dt} \geq  \int_{a}^{b}{ \left( \int_{a}^{b}{ \frac{f(r)}{ b+b} dr} \right) f(t) dt}
= \frac{1}{2b} \left( \int_{a}^{b}{ f(t) dt} \right)^2.
\end{align*}
However, here Lemma 3 immediately yields that for every $c_2 > 0$ and all $a<b$ there exists a $c_1$ (depending only on $a,b,c_2$) such that for all differentiable $f:[a,b] \rightarrow \mathbb{R}$ that
do not change sign
$$  \left(\int_{a}^{b}{ f(t) dt} \right)^2 \geq  c_1 \exp{\left(-c_2\frac{ \|f_x\|_{L^2[a,b]}}{\|f\|_{L^2[a,b]}}\right)}\|f\|^2_{L^2[a,b]}.$$
\end{proof}
It is not difficult to check that all the steps are sharp up to constants for $f=u_n$ and this guarantees the sharpness of our Theorem up to constants.

\section{Proof of Theorem 2}

\subsection{The differential operator.} The self-adjoint operator $\mathcal{L}_{a,b}  \mathcal{L}_{a,b}^*$ has the form
$$  (\mathcal{L}_{a,b}  \mathcal{L}_{a,b}^*f)(s) = \int_{0}^{\infty}{\frac{e^{-a(s+t)} - e^{-b(s+t)}}{s + t} f(t) dt}$$
and its eigenfunctions $(u_n)_{n=1}^{\infty}$ now correspond to a weighted fourth-order differential operator. 
\begin{lemma}[Bertero \& Gr\"unbaum, \cite{gru1}]
The eigenfunctions $(u_n)$ form a basis in $L^2[0, \infty]$. Moreover, if 
$$ \hat{D}_t= -\frac{d^2}{dt^2}\left(t^2 \frac{d^2}{dt^2}  \right) + (a^2 + b^2)\frac{d}{dt} \left(t^2  \frac{d}{dt} \right) + (-a^2b^2 t^2 + 2a^2),$$
then the eigenfunctions of $D_t$ are also given by $(u_n)_{n=1}^{\infty}$.
\end{lemma}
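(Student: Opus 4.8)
The plan is to follow the route of Lemma~\ref{lem:gru_commute1}: establish that $\mathcal{L}_{a,b}\mathcal{L}_{a,b}^*$ commutes with the fourth-order operator $\hat{D}_t$ by integration by parts, and then read off the coincidence of eigenfunctions (and the basis property) from the spectral theorem. The decisive simplification is that the kernel
$$ K(s,t) = \frac{e^{-a(s+t)} - e^{-b(s+t)}}{s+t} = \phi(s+t), \qquad \phi(u) = \frac{e^{-au}-e^{-bu}}{u}, $$
is a function of $s+t$ alone, so $\partial_s^k K = \partial_t^k K = \phi^{(k)}(s+t)$ for every $k$, and the only asymmetry between $\hat{D}_s K$ and $\hat{D}_t K$ comes from the polynomial coefficients $t^2$ and $2a^2 - a^2 b^2 t^2$.

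First I would verify the kernel symmetry $\hat{D}_s K(s,t) = \hat{D}_t K(s,t)$. Writing $u=s+t$ and expanding through $\partial_t^2(t^2\phi'') = 2\phi'' + 4t\phi''' + t^2\phi''''$ and $\partial_t(t^2\phi') = 2t\phi' + t^2\phi''$ (and likewise in $s$), all symmetric pieces cancel and one is left with
$$ \hat{D}_s K - \hat{D}_t K = (s-t)\left[ -\bigl(4\phi''' + u\phi''''\bigr) + (a^2+b^2)\bigl(2\phi' + u\phi''\bigr) - a^2 b^2\, u\phi \right]. $$
The bracket vanishes identically: setting $\psi(u) = u\phi(u) = e^{-au} - e^{-bu}$ one has $\psi'' = 2\phi' + u\phi''$ and $\psi'''' = 4\phi''' + u\phi''''$, so the bracket equals $-\psi'''' + (a^2+b^2)\psi'' - a^2 b^2 \psi$, which is zero precisely because $e^{-au}$ and $e^{-bu}$ are annihilated by the constant-coefficient operator $(\partial_u^2 - a^2)(\partial_u^2 - b^2)$. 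This is the exact analogue of the identity $D_s\tfrac{1}{t+s} = D_t\tfrac{1}{t+s}$ used for Theorem~1, and spotting the substitution $\psi = u\phi$ is what makes it clean.

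With the symmetry in hand I would integrate by parts four times to move $\hat{D}_t$ off $f$ and onto the kernel, establishing
$$ \int_0^\infty K(s,t)\, \hat{D}_t f(t)\, dt = \int_0^\infty \bigl(\hat{D}_t K(s,t)\bigr) f(t)\, dt = \int_0^\infty \bigl(\hat{D}_s K(s,t)\bigr) f(t)\, dt = \hat{D}_s \int_0^\infty K(s,t) f(t)\, dt, $$
which is the desired commutation $\mathcal{L}_{a,b}\mathcal{L}_{a,b}^* \hat{D}_t f = \hat{D}_t \mathcal{L}_{a,b}\mathcal{L}_{a,b}^* f$; the self-adjoint weighted divergence form of $\hat{D}_t$ is what permits this transfer without stray terms. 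The main obstacle is the boundary analysis: at $t\to\infty$ the boundary terms vanish by the exponential decay of $K$ and its derivatives (on a suitable dense class of $f$, e.g.\ smooth functions of at most polynomial growth), whereas at the singular endpoint $t=0$ the weight $t^2$ degenerates, so one must check that every surviving boundary contribution carries a factor of $t^2$ or its derivatives at $0$ and therefore drops out. Pinning down the self-adjoint domain so that nothing survives at $t=0$ is the delicate point.

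Finally, for the two assertions: since $K\in L^2([0,\infty)^2)$ by the exponential decay, $\mathcal{L}_{a,b}\mathcal{L}_{a,b}^*$ is Hilbert--Schmidt, hence compact and self-adjoint, and it is injective because $\mathcal{L}_{a,b}$ has dense range, so $\mathcal{L}_{a,b}^*$ is injective and $\ker \mathcal{L}_{a,b}\mathcal{L}_{a,b}^* = \ker \mathcal{L}_{a,b}^* = \{0\}$. The spectral theorem then yields a complete orthonormal basis $(u_n)_{n=1}^\infty$ of eigenfunctions, which is the basis claim. The commutation shows $\hat{D}_t$ leaves each eigenspace of $\mathcal{L}_{a,b}\mathcal{L}_{a,b}^*$ invariant; invoking the non-degeneracy of the spectrum, i.e.\ the same considerations on multiplicity already used for Theorem~1, forces the eigenfunctions of $\hat{D}_t$ to coincide with $(u_n)_{n=1}^\infty$.
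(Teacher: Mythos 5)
Your proposal is correct and follows essentially the same route as the paper, which reduces the commutation $\hat{D}_t \mathcal{L}_{a,b}\mathcal{L}_{a,b}^* = \mathcal{L}_{a,b}\mathcal{L}_{a,b}^*\hat{D}_t$ by integration by parts to the kernel identity $\hat{D}_s K = \hat{D}_t K$ and then appeals to spectral theory plus non-degeneracy. You in fact supply more detail than the paper does: the verification of the kernel identity via $\psi(u)=u\phi(u)$ being annihilated by $(\partial_u^2-a^2)(\partial_u^2-b^2)$, and the explicit flagging of the boundary terms at $t=0$ and $t\to\infty$, are exactly the steps the paper leaves implicit (deferring to Bertero--Gr\"unbaum).
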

The argument is very similar to before, the crucial ingredient is the commutation relation
$$ \hat{D}_t   \mathcal{L}_{a,b}  \mathcal{L}_{a,b}^*   =    \mathcal{L}_{a,b}  \mathcal{L}_{a,b}^*    \hat{D}_t   \  $$
which reduces, after several integration by parts, to
$$ \hat{D}_s \frac{e^{-a(s+t)} - e^{-b(s+t)}}{s + t} =  \hat{D}_t \frac{e^{-a(s+t)} - e^{-b(s+t)}}{s + t}.$$

\subsection{Proof of Theorem 2}

\begin{proof} The overall structure mirrors that of Theorem 2, it suffices to record the differences. 
We proceed as before and normalize to $\|f\|_{L^2} = 1$. Using integration by parts, we can bound
$$ \left\langle \hat{D}_t f, f\right\rangle = \left\langle  -\frac{d^2}{dt^2}\left(t^2 \frac{d^2}{dt^2} f(t) \right) + (a^2 + b^2)\frac{d}{dt} \left(t^2  \frac{d}{dt} f(t) \right) + (-a^2b^2 t^2 + 2a^2)f, f \right\rangle$$
from above by
\begin{align*} J &\leq \int_{0}^{\infty}{ t^2 \left(\frac{d^2}{dt^2} f\right)^2 dt} +  (a^2+b^2) \int_{0}^{\infty}{ t^2 \left(\frac{d}{dt} f(t)\right)^2 dt} +  (a^2+b^2) \int_{0}^{\infty}{( a^2 b^2 t^2+2a^2) f(t)^2 dt}\\
&\leq \| x f_{xx} \|^2_{L^2[0,\infty]} + c_1  \|x f_x\|^2_{L^2[0,\infty]} + c_2\|x f\|^2_{L^2[0,\infty]} + c_3 \|f\|^2_{L^2[0,\infty]}
\end{align*}
The direct spectral analysis of the operator $\hat{D}_t$ seems trickier, however, we can use
$$\hat{D}_t(f)= \mathcal{L}_{a,b} \circ D_t \circ (\mathcal{L}_{a,b})^{-1}$$
to conclude that $\hat{D}_t$ has the same eigenvalues as $D_t$, where $D_t$ is the differential operator from the proof of Theorem 1. In particular, $\lambda_n \geq c_4 n^2$ for some $c_4 > 0$.
Therefore 
\begin{align*}
  \sum_{n=1}^{\infty}{c_4  n^2 |\left\langle f, v_n \right\rangle|^2} \leq  \sum_{n=1}^{\infty}{\lambda_n |\left\langle f, v_n \right\rangle|^2}  \leq &\| x f_{xx} \|^2_{L^2[0,\infty]} + c_1  \|x f_x\|^2_{L^2[0,\infty]} \\
&+ c_2\|x f\|^2_{L^2[0,\infty]} + c_3 \|f\|^2_{L^2[0,\infty]}.\end{align*}
Let 
$$ J   = \| x f_{xx} \|^2_{L^2[0,\infty]} + c_1  \|x f_x\|^2_{L^2[0,\infty]} + c_2\|x f\|^2_{L^2[0,\infty]} + c_3 \|f\|^2_{L^2[0,\infty]}.$$
Using the argument from the proof of Lemma 1 in conjunction with
$$ \|f\|_{L^2} = 1 = \sum_{n=1}^{\infty}{|\left\langle f, v_n \right\rangle|^2},$$
we can conclude the existence of a constant $ 0 < c_5 < \infty$ depending only on $c_4$ such that
$$   \sum_{n \leq c_5 \sqrt{J}}^{}{ |\left\langle f, v_n \right\rangle|^2} \geq \frac{\|f\|^2_{L^2}}{2}.$$
The argument now follows from the exponential decay of the singular values (see \cite{led0}) and the elementary inequality $(a^2+b^2+c^2+d^2)^{1/2} \leq a+b+c+d$ for positive $a,b,c,d \in \mathbb{R}_{\geq 0}$
 $$ \sqrt{J} \leq  \| x f_{xx} \|_{L^2[0,\infty]} + c_1  \|x f_x\|_{L^2[0,\infty]} + c_2\|x f\|_{L^2[0,\infty]} + c_3 \|f\|_{L^2[0,\infty]}.$$
\end{proof}

\section{Proof of Theorem 3}

\subsection{The Differential Operator} 
Consider the self-adjoint operator
 $\mathcal{F}_T: L^2[-1,1] \rightarrow L^2[-1,1]$
$$ (\mathcal{F}_Tf)(x) =  \int_{-1}^{1}{f(x) e^{i  \xi x}dx}.$$
The crucial ingredient, which the monograph of Osipov, Rokhlin \& Xiao \cite{mono} ascribes to Landau \& Pollak \cite{pr2, pr3}  and Slepian \& Pollak \cite{pr1}, is that
the eigenfunctions of $\mathcal{F}_T$ coincide with the eigenfunctions of a differential operator.
\begin{lemma}[\cite{pr2, pr3, pr1}]  The eigenfunctions $(u_n)_{n=1}^{\infty}$ of $\mathcal{F}_T$ coincide with the eigenfunctions of 
$$D = -(1-x^2)\frac{d^2}{dx^2} + 2x\frac{d}{dx} +  x^2 \qquad \mbox{on} ~ [-1,1].$$
\end{lemma}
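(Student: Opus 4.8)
The plan is to follow the same template already used for Theorems 1 and 2: first exhibit a commutation relation between the integral operator $\mathcal{F}_T$ and the differential operator $D$, and then upgrade this to a statement about shared eigenfunctions. The natural starting point is to rewrite $D$ in Sturm--Liouville form. Expanding $-\frac{d}{dx}\left[(1-x^2)\frac{d}{dx}\right] = -(1-x^2)\frac{d^2}{dx^2} + 2x\frac{d}{dx}$ shows that
$$ D = -\frac{d}{dx}\left[(1-x^2)\frac{d}{dx}\right] + x^2, $$
so that $D$ is formally self-adjoint on $[-1,1]$ and singular precisely at the endpoints $\pm 1$, where the leading coefficient $1-x^2$ vanishes. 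This is the classical prolate spheroidal operator.

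The heart of the argument is the pointwise kernel identity $D_\xi e^{i\xi x} = D_x e^{i\xi x}$, which I would verify by direct differentiation: applying $D_x$ to $e^{i\xi x}$ produces the scalar factor $\xi^2 + x^2 - x^2\xi^2 + 2i\xi x$, which is manifestly symmetric in $x$ and $\xi$, so applying $D_\xi$ yields the identical factor. Granting this, one pulls $D_\xi$ inside the integral and swaps it for $D_x$:
$$ D_\xi (\mathcal{F}_T f)(\xi) = \int_{-1}^{1}{(D_\xi e^{i\xi x})\, f(x)\, dx} = \int_{-1}^{1}{(D_x e^{i\xi x})\, f(x)\, dx}, $$
and then integrates by parts twice in the $x$-variable to move $D_x$ off the kernel and onto $f$. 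Because $D$ is in Sturm--Liouville form with leading coefficient $1-x^2$ vanishing at $\pm 1$, all boundary terms drop out, leaving $\int_{-1}^{1} e^{i\xi x}(D_x f)(x)\,dx = (\mathcal{F}_T D f)(\xi)$. This is exactly the commutation relation $D\,\mathcal{F}_T = \mathcal{F}_T\, D$, in complete analogy with Lemma \ref{lem:gru_commute1}.

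The remaining, and genuinely delicate, step is to pass from commutation to coincidence of eigenfunctions. Here I would invoke that $D$, realized with the natural regularity conditions at the singular endpoints $\pm 1$, is a self-adjoint Sturm--Liouville operator with a complete orthonormal basis of real eigenfunctions and, crucially, \emph{simple} spectrum, so that every eigenvalue has a one-dimensional eigenspace. Since $\mathcal{F}_T$ commutes with $D$, it must map each eigenspace of $D$ into itself; as these eigenspaces are one-dimensional, each eigenfunction of $D$ is automatically an eigenfunction of $\mathcal{F}_T$. I expect this simplicity/multiplicity argument to be the main obstacle: the commutation computation is routine once the kernel symmetry is spotted, but the implication ``commuting operators share eigenfunctions'' holds only in the absence of spectral degeneracies, which is precisely the kind of ``additional consideration regarding multiplicity'' flagged in the Bertero--Gr\"unbaum step. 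For a second-order Sturm--Liouville operator the simplicity of the spectrum is classical, and the same reasoning then simultaneously diagonalizes $\mathcal{F}_T$, its adjoint, and $\mathcal{F}_T^* \mathcal{F}_T$, which is what the subsequent proof of Theorem 3 requires.
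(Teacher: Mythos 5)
Your argument is correct, and it is precisely the template the paper itself uses for the analogous Bertero--Gr\"unbaum commutation lemma (the paper does not actually prove the present statement, citing Landau, Pollak and Slepian instead): the kernel symmetry $D_\xi e^{i\xi x} = D_x e^{i\xi x}$ checks out since the scalar factor $\xi^2 + x^2 - x^2\xi^2 + 2i\xi x$ is symmetric, the boundary terms in the double integration by parts vanish because the Sturm--Liouville coefficient $1-x^2$ vanishes at $\pm 1$, and simplicity of the spectrum of the singular second-order operator correctly closes the ``multiplicity'' gap needed to pass from commutation to shared eigenfunctions. No objections.
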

It is classical that the eigenvalues of the differential operator grow asymptotically as $\lambda_n \sim_{} n^2$, in particular, we have $\lambda_n \geq c_3 n^2$ for some $c_3 > 0$.

\subsection{Proof of Theorem 3} 
\begin{proof} Let $f \in H^1[-1,1]$ be arbitrary. We have
$$
 \sum_{n=1}^{\infty}{c_3  n^2 |\left\langle f, u_n \right\rangle|^2} \leq \sum_{n=1}^{\infty}{\lambda_n |\left\langle f, u_n \right\rangle|^2} = \left\langle Df, f\right\rangle.
$$
Repeated integration by parts gives that
\begin{align*}  \left\langle Df, f\right\rangle &= \int_{-1}^{1}{(1-x^2)f_x(x)^2 + x  \frac{d}{dx}(f(x)^2) + x^2 f(x)^2 dx}\\
&= f(1)^2- f(-1)^2 + \int_{-1}^{1}{(1-x^2)f_x(x)^2 + (x^2-1) f(x)^2 dx} \\
&\leq \left[ f(1)^2 - f(-1)^2 \right] + \int_{-1}^{1}{f_x(x)^2 + f(x)^2 dx}.
  \end{align*}
We again distinguish cases: either $f$ changes sign or it does not. If $f$ has a root somewhere, then with Lemma 2 we may conclude that
$$ \max \left( f(1)^2, f(-1)^2, \int_{-1}^{1}{f(x)^2 dx} \right) \leq 4 \int_{-1}^{1}{f_x(x)^2 dx}$$
and the result follows as before. The difference in the final result is a result of the different asymptotical behaviour of the eigenvalues (see e.g. Widom \cite{widom}, a very
precise description of the asymptotic behavior can be found in Fuchs \cite{fuchs})
$$ \log{ \lambda_n} \sim - n \log{n}.$$
If $f$ does not change sign, we have to argue differently. Assume w.l.o.g. that $f \geq 0$. Then
\begin{align*}
\| \mathcal{F}_{T} f\|_{L^2[-1,1]}^2 &= \left\langle \mathcal{F}_T f, \mathcal{F}_T f\right\rangle_{L^2[-1,1]}  =
 \int_{-1}^{1}{  \left( \int_{-1}^{1}{ f(x) e^{i x \xi} dx} \right) \overline{  \left( \int_{-1}^{1}{ f(x) e^{i x \xi} dx} \right) } d\xi}\\
&=  \int_{-1}^{1}{   \int_{-1}^{1}{   \int_{-1}^{1}{  f(x) f(y) e^{i \xi (x-y)} d\xi} d x} dy} \\
&= \int_{-1}^{1}{   \int_{-1}^{1}{ \frac{2 \sin{(x-y)}}{x-y} f(x) f(y) dx dy}} \\
&\geq \frac{1}{2} \int_{-1}^{1}{   \int_{-1}^{1}{  f(x) f(y) dx dy}} = \frac{1}{2} \left(\int_{-1}^{1}{f(x) dx}\right)^2.
\end{align*}
It remains to show that for every $c_2 > 0$ there exists $c_1 > 0$ such that for all differentiable $f:[-1,1] \rightarrow \mathbb{R}$
that do not change sign
$$ \frac{1}{2} \left(\int_{-1}^{1}{f(x) dx}\right)^2 \geq c_1 \exp{\left(-c_2\frac{ \|f_x\|_{L^2[a,b]}}{\|f\|_{L^2[a,b]}}\right)}\|f\|^2_{L^2[a,b]}$$
which follows from Lemma 3.
\end{proof}


\end{document}